\newtheorem{theorem}{Theorem}[section]
\newtheorem{corollary}[theorem]{Corollary}
\begin{document}

\title{Whitney's Theorem for 2-Regular Planar Digraphs}

\author{Dan Archdeacon\\[1mm]
  Department of Mathematics \\ and Statistics\\
  University of Vermont\\
  Burlington, VT, USA
%  {\tt dan.archdeacon@uvm.edu}
\and
  Matt DeVos\thanks{Supported by a Canadian NSERC Discovery Grant.}\\[1mm]
  Department of Mathematics\\
  Simon Fraser University\\
  Burnaby, BC, Canada\\
  {\tt mdevos@sfu.ca}
\and
  Stefan Hannie\\[1mm]
  Department of Mathematics\\
  Simon Fraser University\\
  Burnaby, BC, Canada\\
  {\tt shannie@sfu.ca}
\and
  Bojan Mohar\thanks{Supported in part by an NSERC Discovery Grant (Canada),
    by the Canada Research Chairs program, and by the
    Research Grant P1--0297 of ARRS (Slovenia).}~\thanks{On leave from:
    IMFM, Department of Mathematics, Ljubljana,
    Slovenia.}\\[1mm]
  Department of Mathematics\\
  Simon Fraser University\\
  Burnaby, BC, Canada\\
  {\tt mohar@sfu.ca}
}

\date{\emph{In memory of Dan Archdeacon, our coauthor and friend.}%
\vspace*{-2em}
}

\maketitle

\begin{abstract}
A digraph is 2\emph{-regular} if every vertex has both indegree and outdegree two.  We define an \emph{embedding} of a 2-regular digraph to be a 2-cell embedding of the underlying graph in a closed surface with the added property that for every vertex~$v$, the two edges directed away from $v$ are not consecutive in the local rotation around $v$.  In other words, at each vertex the incident edges are oriented in-out-in-out.  The goal of this article is to provide an analogue of Whitney's theorem on planar embeddings in the setting of 2-regular digraphs.  In the course of doing so, we note that Tutte's Theorem on peripheral cycles also has a natural analogue in this setting.
\end{abstract}

\section{Introduction}

Although our central subject will be embeddings of certain digraphs in the sphere, we begin by introducing some general terminology for working with embedded (undirected) graphs.  We will use the term \emph{surface} for a closed 2-manifold without boundary.  If $G = (V,E)$ is a graph, an embedding of $G$ in a surface $\mathcal{S}$ is a function $\phi$ which maps $V$ injectively to $\mathcal{S}$ and assigns each edge $uv \in E$ to a simple curve $\phi(uv)$ in $\mathcal{S}$ from $\phi(u)$ to $\phi(v)$ with the added property that the curves associated to distinct edges are internally disjoint.  As usual, we will identify the graph $G$ with its image in $\mathcal{S}$ under $\phi$.  The \emph{faces} are the connected components of the space obtained from $\mathcal{S}$ by deleting $\phi(V) \cup \phi(E)$, and we have a \emph{2-cell} embedding if every face is homeomorphic to an open disc.  In the case of a 2-cell embedding, each face is bounded by a closed walk in $G$, and we call these \emph{facial walks}.

If $\phi_1$ and $\phi_2$ are 2-cell embeddings of $G$ in $\mathcal{S}_1$ and $\mathcal{S}_2$, then we say that $\phi_1$ and $\phi_2$ are \emph{equivalent} if they have the same facial walks.  Throughout we will focus on 2-cell embeddings up to this equivalence, and this permits us to give a purely combinatorial description of an embedded graph by specifying the facial walks. (The embedding may be recovered from this description by treating the graph as a 1-cell complex and adding discs according to the facial walks.) The reader is referred to \cite{MT} for further details. Embeddings of digraphs have been treated previously in \cite{ABJ96, MR1900680, MR2079905, CGH14, Farr13, HLZX09, johnson, sneddon}.

\begin{figure}[htb]
\centerline{\includegraphics[height=4cm]{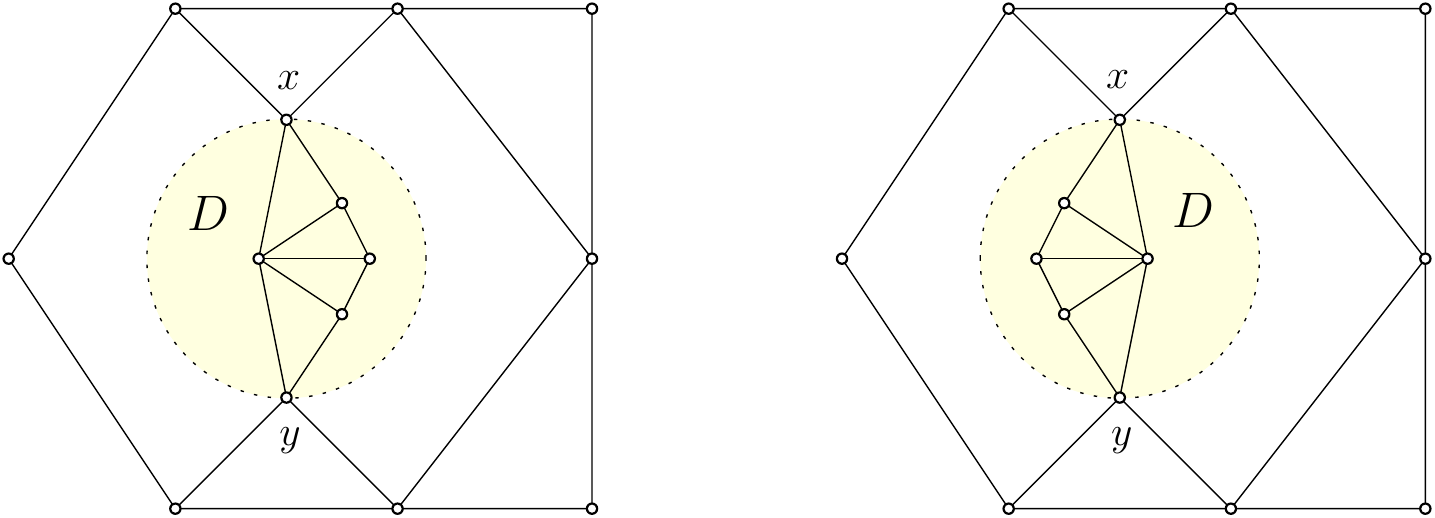}}
\caption{Whitney flip for an undirected embedded graph}
\label{fig:flip undirected}
\end{figure}

Let $\phi$ be a 2-cell embedding of a graph $G$ in a surface $\mathcal{S}$.  Suppose that $\mathcal{D} \subseteq \mathcal{S}$ is homeomorphic to a closed disc, and suppose that the boundary of $\mathcal{D}$ meets $G$ in exactly two points, say $x$ and $y$.  Then there is a homeomorphism $\alpha$ mapping $\mathcal{D}$ to a closed unit disc $D \subseteq \mathbb{R}^2$ where $\alpha(x)$ and $\alpha(y)$ are antipodes of $D$.  Applying the map $\alpha$, followed by the mirror reflection of $D$ through the line containing $\alpha(x)$ and $\alpha(y)$, and then applying $\alpha^{-1}$ gives us a new embedding $\phi'$ of $G$ in the same surface $\mathcal{S}$, and we call this operation a \emph{Whitney flip} of $\mathcal{D}$. See Figure \ref{fig:flip undirected} for an example.

The following theorem shows that for a 2-connected graph embedded in the 2-sphere $S^2$, Whitney flips allow us to move between any two embeddings.

\begin{theorem}[Whitney \cite{Wh33b}]
\label{whitney}
If $\phi_1$ and $\phi_2$ are embeddings of a 2-connected graph~$G$ in the $2$-sphere, then by applying a sequence of Whitney flips, ~$\phi_1$ can be transformed into an embedding equivalent to $\phi_2$.
\end{theorem}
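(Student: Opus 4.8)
The plan is to reduce the global statement to a local one via the combinatorial encoding of 2-cell embeddings by their facial walks, and then to prove that any two such encodings of a 2-connected planar graph are related by flips. First I would recall that a 2-cell embedding in the sphere is equivalent data to a rotation system (a cyclic ordering of the edges around each vertex) whose genus, computed by the Euler-formula count of facial walks, equals $0$; equivalently, by the Jordan curve theorem, it is the same as a choice of cyclic order at each vertex such that the resulting facial walks bound discs. A Whitney flip of a disc $\mathcal{D}$ whose boundary meets $G$ in two points $x,y$ has a clean combinatorial description: splitting $G$ along $x$ and $y$ into two pieces $G_1$ and $G_2$ glued at $\{x,y\}$, the flip reverses the rotation system on one side (say $G_2$) while leaving the other side fixed. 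So the theorem becomes: any two spherical rotation systems of $G$ differ by a sequence of such ``reverse one side of a 2-separation'' moves.

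Next I would proceed by induction on $|E(G)|$, using the structure of $2$-connected graphs. If $G$ is $3$-connected, then by the Whitney--Tutte uniqueness theorem a $3$-connected planar graph has (up to reflection of the whole sphere) a \emph{unique} embedding, and reflecting the whole sphere is itself a Whitney flip (take $\mathcal{D}$ to be a disc meeting $G$ in two points near a single edge, iterated, or observe the full reflection is the flip on a disc bounded by a curve through two vertices that captures everything on one side); so $\phi_1$ and $\phi_2$ are already equivalent or related by one flip. If $G$ is not $3$-connected, it has a $2$-separation $(G_1,G_2)$ with $G_1\cap G_2=\{x,y\}$. In a spherical embedding, $x$ and $y$ must be ``nested'' appropriately; I would argue that after possibly one flip on the $\mathcal{D}$ carved out by one side, each embedding $\phi_i$ restricts to an embedding of the augmented pieces $G_1+xy$ and $G_2+xy$ (adding the virtual edge $xy$), and conversely any spherical embeddings of these two smaller $2$-connected graphs glue to one of $G$. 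Then apply the induction hypothesis to $G_1+xy$ and to $G_2+xy$ separately, lift the flip sequences back to $G$ (a flip in a piece is a flip in $G$, since its disc lies inside the disc bounded by the piece), and finish by aligning the two sides with at most one further flip across the separation.

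The step I expect to be the main obstacle is the careful bookkeeping at the separating pair $\{x,y\}$: one must verify that a Whitney flip inside the subgraph $G_1+xy$ genuinely corresponds to a Whitney flip inside $G$ (the virtual edge $xy$ does not exist in $G$, so the disc realizing the flip must be chosen to follow the actual edges of $G_1$ while still meeting $G$ only at $x$ and $y$), and that the relative position of the ``$G_2$-side'' with respect to the virtual edge $xy$ can be normalized by a single flip. Handling the case where $\{x,y\}$ is joined by an actual edge of $G$, or where there are several parallel $2$-separations sharing the pair $\{x,y\}$ (so $G$ minus $\{x,y\}$ has many components), requires choosing the separation to be ``innermost'' or bundling the components, so that the induction strictly decreases a suitable complexity measure such as $|E(G)|+|V(G)|$. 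Once that is set up, the inductive gluing and lifting of flip sequences is routine, and the base case is exactly the $3$-connected uniqueness fact together with the observation that the antipodal reflection of $S^2$ is realized by Whitney flips.
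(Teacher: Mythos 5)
Your outline is the classical route to Theorem \ref{whitney}, and it is worth noting that the paper itself supplies no proof of this statement (it is cited to Whitney); the closest in-paper argument is the proof of the digraph analogue, Theorem \ref{digraph-whitney}. Your plan and that proof share the same skeleton --- base case by uniqueness of embeddings of the highly connected pieces (Corollary \ref{maincor} for you, Corollary \ref{ourcor} there), inductive step by cutting at a small separation, replacing one side by a virtual edge, and lifting flip sequences back --- but they differ in an instructive way. You propose a symmetric decomposition: split at a $2$-cut $\{x,y\}$, add the virtual edge to \emph{both} pieces, induct on both, and re-glue. The paper's digraph proof is asymmetric: it chooses a \emph{minimal} side $X$ of the cut, so that the piece on $X$ (with the added edge) is automatically strongly $2$-edge-connected and hence uniquely embedded with no induction needed, and only the other piece is handled inductively; this avoids the two-sided gluing bookkeeping and the need to argue that arbitrary spherical embeddings of the two augmented pieces glue back to an embedding of $G$. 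Adopting the same minimality trick in the undirected setting (take an innermost $2$-separation so that one side plus the virtual edge is $3$-connected or a cycle) would simplify your inductive step considerably.

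Two points in your sketch need attention. First, the step you yourself flag --- lifting a Whitney flip of $G_1+xy$ to a flip of $G$ --- is a genuine issue only when the flip disc's boundary meets the virtual edge $xy$ at an interior point, since in $G$ that point lies in the region occupied by $G_2$ and is not a point of the graph; if the boundary avoids $xy$ altogether, the lifted disc either misses or wholly contains $G_2$ and the flip transfers verbatim. So the missing lemma is that the inductively produced flips can be chosen with discs avoiding the virtual edge; this is exactly the assertion the paper makes (without detailed justification) for the new edge $e''$ in the proof of Theorem \ref{digraph-whitney}, and it is the part of your argument you would have to make precise. Second, your worry about reflections in the $3$-connected base case is moot: equivalence here means having the same facial walks, and a mirror image of an embedding has the same facial walks, so Corollary \ref{maincor} already gives equivalence outright and no ``reflection-realizing'' flip (iterated or otherwise) is needed --- which is fortunate, since that parenthetical construction as stated is the shakiest part of your base case.
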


For a 3-connected graph $G$ no meaningful Whitney flip can be performed, so the above theorem immediately yields the following.

\begin{corollary}
\label{maincor}
Any two embeddings of a 3-connected graph in the $2$-sphere are equivalent.
\end{corollary}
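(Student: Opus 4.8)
The plan is to derive the corollary directly from Whitney's theorem (Theorem~\ref{whitney}) by showing that, when $G$ is 3-connected, a Whitney flip can never change the facial walks. Since a 3-connected graph is in particular 2-connected, Theorem~\ref{whitney} supplies a finite sequence of Whitney flips carrying $\phi_1$ to an embedding $\phi_1'$ equivalent to $\phi_2$. If every individual flip in this sequence leaves the facial walks unchanged, then $\phi_1$ is equivalent to $\phi_1'$, and hence to $\phi_2$, which is exactly what we want. So the whole task reduces to verifying that in a 3-connected graph no Whitney flip has any effect on the facial structure.

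This in turn reduces to a structural statement about the flip disc. Suppose $G$ is 3-connected and $\mathcal{D} \subseteq S^2$ is a closed disc whose boundary meets $G$ in exactly two points $x$ and $y$. First I would rule out vertices in the interior of $\mathcal{D}$: if $v$ were such a vertex, then each of the at least three edges incident with $v$ would have to leave $\mathcal{D}$, and it can do so only by crossing $\partial\mathcal{D}$ (there is no other vertex inside $\mathcal{D}$ at which it could terminate, and a 3-connected graph is loopless), so $\partial\mathcal{D}$ would meet $G$ in at least three points -- a contradiction. The same counting shows that if $x$ (or $y$) happens itself to be a vertex, then at most one edge incident with it can enter the interior of $\mathcal{D}$. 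Tracing each edge inward from $x$ and from $y$ then forces $G \cap \mathcal{D}$ to be one of two trivial configurations: a single sub-arc of a single edge $e$, or -- in the case that both $x$ and $y$ are vertices -- a subgraph $H$ that shares with the rest of $G$ only the two vertices $x$ and $y$, and in this latter case 3-connectivity forces $H$ to consist of the single edge $xy$, since otherwise $\{x,y\}$ would be a $2$-separation of $G$.

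Finally I would check that flipping such a trivial disc does not change the embedding: reflecting a small bubble around an arc of one edge, or reflecting a disc bounded on one side by a lone edge $xy$, is realized by a homeomorphism that carries $G$ to itself up to reparametrization and sends each face to a face traced by the same facial walk, so the list of facial walks is unaffected. Assembling these observations completes the proof, modulo the usual conventions (a 3-connected graph is loopless and has at least four vertices, which is what makes the minimum-degree-$\geq 3$ bound available). The step I expect to be the main obstacle is the structural claim of the second paragraph -- in particular the point-set bookkeeping needed to pass from ``$\partial\mathcal{D}$ meets $G$ in two points'' to ``$\mathcal{D}$ is trivial,'' ensuring that no pathological disc, with an edge winding into and out of $\mathcal{D}$, slips through the case analysis; everything after that is routine.
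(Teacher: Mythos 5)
Your overall route is the paper's own first route: the paper derives this corollary from Theorem~\ref{whitney} with the one-line remark that a 3-connected graph admits no ``meaningful'' Whitney flip, and your proposal is an attempt to make that remark precise. (The paper also records a second proof, which it goes on to emphasize: by Theorem~\ref{tutte} every edge of a 3-connected graph lies on two peripheral cycles, and in any sphere embedding a peripheral cycle must bound a face, so the facial walks are exactly the peripheral cycles and are determined by the abstract graph alone. That route has the advantage of not leaning on Theorem~\ref{whitney}, of which the corollary is in fact a key ingredient, as the paper points out.)

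The one place your write-up does not hold together as stated is the crux you yourself flagged: ruling out vertices inside $\mathcal{D}$. You argue that each of the at least three edges at an interior vertex $v$ ``would have to leave $\mathcal{D}$'' because ``there is no other vertex inside $\mathcal{D}$ at which it could terminate'' --- but that parenthesis assumes exactly what is being proved; nothing yet prevents several vertices inside $\mathcal{D}$ joined by edges that never touch $\partial\mathcal{D}$. The correct argument is the separation argument you already use in the case that $x$ and $y$ are both vertices: the part of $G$ in $\mathcal{D}$ meets the rest of $G$ only at $x$ and $y$, so if both the interior and the exterior of $\mathcal{D}$ contained vertices, deleting at most two elements of $G$ (the vertex $x$ or the unique edge through $x$, and likewise for $y$) would disconnect $G$, contradicting 3-connectivity (which also gives 3-edge-connectivity, covering the two-edge and mixed cuts). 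Your two-case list of trivial configurations is also not exhaustive: you omit the mixed case ($x$ a vertex, $y$ interior to an edge incident with $x$), and the case in which $\mathcal{D}$ is the \emph{large} side, containing all vertices while its complement holds only an arc of one edge; flipping that disc is a small trivial flip composed with a reflection of the whole sphere, so you need the (easy) remark that a facial walk and its reverse are regarded as the same walk. Finally, when both $x$ and $y$ are vertices and there are no interior vertices, what excludes two parallel edges inside $\mathcal{D}$ is simplicity of $G$, not the $\{x,y\}$-separation (indeed the statement can fail for 3-connected multigraphs such as $K_4$ with a doubled edge, so ``3-connected'' here tacitly means simple). All of these repairs are routine, and with them your argument is a correct fleshing-out of the paper's remark.
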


In fact, the above corollary is a key step in the proof of Whitney's Theorem.  There is a nice proof of this corollary which comes from the work of Tutte, and we will follow his approach.  For an undirected graph $G$ we say that a cycle $C \subseteq G$ is \emph{peripheral} if $C$ is an induced subgraph and $G - V(C)$ is connected.

\begin{theorem}[Tutte \cite{Tu63}]
\label{tutte}
Every edge in a 3-connected graph is contained in at least two peripheral cycles.
\end{theorem}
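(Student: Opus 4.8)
The plan is to use the classical theory of \emph{bridges}. For a cycle $C$ in $G$, a \emph{bridge} of $C$ is either a chord of $C$ or a connected component $K$ of $G-V(C)$ together with the edges joining $K$ to $C$; the \emph{attachments} of a bridge are its vertices on $C$, and its \emph{interior} is the set of its vertices off $C$. Since $G$ is $3$-connected it is not merely a cycle, so every induced cycle $C$ satisfies $V(C)\subsetneq V(G)$; hence a cycle of $G$ is peripheral precisely when it has exactly one bridge (which is then automatically a non-chord bridge with nonempty interior). I would prove the theorem by first producing one peripheral cycle through a given edge $e=xy$ and then a second; in both steps the workhorse is a rerouting move that trades an arc of the current cycle for a path through one of its bridges, chosen so that some potential strictly decreases.

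For existence, note first that a shortest cycle through $e$ is induced, so if it has at most one bridge we are done. Otherwise, among all induced cycles through $e$ that have at least two bridges, pick one, $C$, together with a bridge $B$, minimising a suitable measure (e.g.\ the number of bridges of $C$, refined lexicographically by $|V(B)|+|E(B)|$). Since $C$ is induced, both $B$ and some other bridge are non-chord bridges with nonempty interior, so if $B$ had only two attachments these would separate $G$; thus $B$ has at least three attachments $a_1,a_2,a_3$ occurring in this cyclic order on $C$, and at most one of the three arcs of $C$ they determine contains $e$, say not the arc $A=C[a_1,a_2]$. Choosing an induced path $Q$ from $a_1$ to $a_2$ through the interior of $B$ and setting $C'=(C-\mathrm{int}(A))\cup Q$ yields a cycle through $e$, and one then checks that $C'$ (shrunk to a shortest cycle through $e$ inside it, if a chord has appeared) either has at most one bridge or contradicts the minimality of the chosen configuration; in either case $e$ lies in a peripheral cycle, say $C$ with unique bridge $B$.

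For the second cycle, use that $B$ has nonempty interior (as $V(C)\subsetneq V(G)$) and, by $3$-connectivity again, at least three attachments. Pick attachments $a,b$ of $B$ so that $e$ lies on the arc $A$ of $C$ from $a$ to $b$ but no other attachment of $B$ does, and let $c$ be a third attachment, which then lies in the interior of the complementary arc. Taking an induced path $Q$ from $a$ to $b$ through the interior of $B$ and setting $C'=A\cup Q$ produces a cycle through $e$ that uses an interior vertex of $B$ and avoids $c$, so $C'\neq C$. Now one reruns the existence argument starting from $C'$: every non-peripheral cycle encountered is induced and therefore has at least two bridges, and so differs from the one-bridge cycle $C$, and by carrying the marked vertex $c$ and insisting that the rerouting never returns it to the current cycle one ensures that the peripheral cycle finally reached also differs from $C$.

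The main obstacle is the combinatorial bookkeeping compressed into ``one checks that $C'$ \dots\ contradicts minimality'' and ``the rerouting never returns $c$ to the current cycle'': one must fix a potential that provably strictly decreases under the reroute (so the process terminates at a peripheral cycle), which forces a careful choice of \emph{which} bridge to reroute through and \emph{which} arc to remove -- typically arranging that some bridge other than $B$ is absorbed -- and one must verify that $Q$ can always be taken internally disjoint from $C$ and, in Step two, that the marked vertex can be kept off the current cycle (rerouting through a different bridge or between a different pair of attachments when it would otherwise be forced onto $Q$, using the flexibility provided by the at-least-three attachments). A conceptually different route avoiding all rerouting is induction via Tutte's Wheel Theorem: $G$ is obtained from a wheel by edge additions and vertex splits, the statement is immediate for wheels, and one tracks how the peripheral cycles through a fixed edge change under each of the two operations -- trading the rerouting analysis for a longer but routine case check.
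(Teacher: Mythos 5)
The paper does not actually prove this statement---it is quoted from Tutte and used as motivation---but its proof of the digraph analogue (Theorem \ref{digraph-tutte}) shows the intended style of argument, and measuring your sketch against it exposes two genuine gaps, both of which you yourself flag as ``the main obstacle'' and then leave unproven. First, the existence step: your extremal quantity is only offered as an example (``e.g.\ the number of bridges of $C$, refined lexicographically by $|V(B)|+|E(B)|$''), and the decisive verification is compressed into ``one then checks that $C'$ \dots{} contradicts the minimality.'' That check is not routine: when you delete the arc $\mathrm{int}(A)$ and reroute through $B$, the discarded interior vertices of $A$, together with chords and bridges attached only to them, reassemble into bridges of $C'$, so the number of bridges can go \emph{up}; and the subsequent ``shrink to a shortest cycle through $e$ inside $C'$'' changes the bridge structure yet again. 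No monovariant is established, so the claimed contradiction (and hence termination at a one-bridge cycle) does not follow as written. The clean way---and the exact analogue of the paper's digraph proof---is a single global extremal choice over all cycles through $e$ (lexicographically maximize the component sizes of $G-V(C)$, then minimize $|C|$), from which ``induced with one bridge'' falls out directly; a local-improvement scheme like yours needs a specific strictly decreasing potential, which you have not supplied.

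Second, the distinctness of the second peripheral cycle. Constructing one cycle $C'\neq C$ through $e$ using an interior vertex of $B$ and avoiding the attachment $c$ is fine, but ``rerun the existence argument starting from $C'$ \dots{} carrying the marked vertex $c$ and insisting that the rerouting never returns it to the current cycle'' is an assertion, not a proof: the extremal process has no memory of $C'$, and when a reroute must pass through the bridge containing $c$ there is no argument that the replacement path can avoid $c$---ensuring this requires either a further appeal to 3-connectivity (a fan-type argument) or, better, restricting the class of cycles considered and adapting the potential. Compare the paper's handling of the same issue for digraphs: the second peripheral cycle is sought only among paths $Q$ edge-disjoint from the first path $P$, and the potential is modified to first maximize the component containing $P$; distinctness is then built into the class and the peripherality argument repeats verbatim. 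Your sketch needs an analogous restricted class and potential (the wheel-theorem route you mention at the end would also work, but it too is only named, not carried out). As it stands, the two steps that constitute the actual content of Tutte's theorem are precisely the ones left as ``bookkeeping.''
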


Observe that for a connected graph $G$ embedded in $S^2$, every peripheral cycle of $G$ must be the boundary of a face by the Jordan Curve Theorem.  Based on this observation and Theorem \ref{tutte} we have that for a 3-connected graph $G$ embedded in $S^2$, the facial walks are given by the peripheral cycles.  Thus, Theorem \ref{tutte} implies Corollary \ref{maincor}.

Our goal in this article is to prove theorems analogous to Theorem \ref{whitney} and Theorem \ref{tutte} in the setting of 2-regular digraphs.

\section{Theorems of Whitney and Tutte for 2-regular digraphs}

For a 2-regular digraph $H$, we define an \emph{embedding} of $H$ in a surface $\mathcal{S}$ to be an embedding of the underlying graph with the additional property that every face is bounded by a directed walk.  If this holds, then for every vertex $v$, which is the head of two edges $e_1, e_2$ and the tail of two edges $f_1, f_2$, the local configuration is as shown on the left in Figure \ref{vsplit} (i.e. the incident edges go in-out-in-out around $v$). This notion of embedding can be extended to arbitrary Eulerian digraphs and was treated earlier by several authors; see, e.g. \cite{MR1900680, MR2079905} or the paper \cite{ABM} in this issue of the journal.

\begin{figure}[htb]
\centerline{\includegraphics[height=3cm]{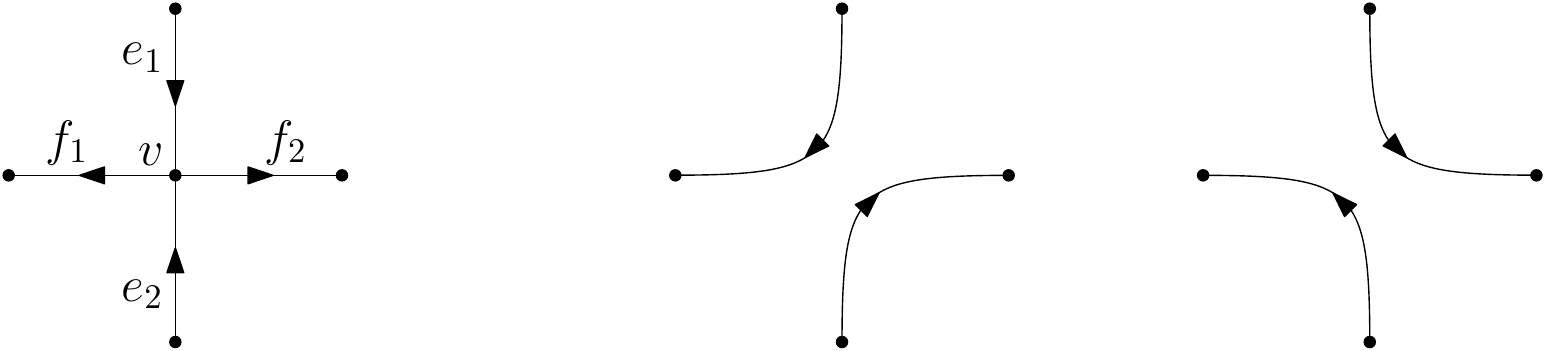}}
\caption{Splitting a vertex}
\label{vsplit}
\end{figure}

To \emph{split} the vertex $v$, we delete $v$ and then add back either a new edge from the tail of $e_i$ to the head of $f_i$ for $i=1,2$ or we add back a new edge from the tail of $e_i$ to the head of $f_{3-i}$ for $i=1,2$.  As shown on the right in Figure \ref{vsplit}, both of the new 2-regular digraphs obtained by splitting $v$ still embed in the surface $\mathcal{S}$ (but the embedding need not be 2-cell any more).  In general, we say that a 2-regular digraph $H'$ which is obtained from $H$ by a sequence of splits is \emph{immersed} in $H$.

Just as any minor of an (ordinary) graph $G$ which embeds in $\mathcal{S}$ also embeds in $\mathcal{S}$, we have that for a 2-regular digraph $H$ which embeds in $\mathcal{S}$, every digraph immersed in $H$ also embeds in $\mathcal{S}$.  In fact, there is a developing theory of 2-regular digraphs under immersion which runs parallel to the classical theory of (undirected) graphs under minors.  The most significant result in this area is due to Johnson \cite{johnson}, who proved an analogue of the Robertson-Seymour structure theorem for 2-regular digraphs which do not immerse a given digraph $H$.  Another theorem of Johnson is the following excluded-immersion theorem for embedding in $S^2$.  Here $C_3^{(2)}$ is the digraph obtained from a directed cycle of length 3 by adding a second edge in parallel with every original one.

\begin{theorem}[Johnson]
A 2-regular digraph $H$ has an embedding in $S^2$ if and only if it does not contain $C_3^{(2)}$ as an immersion.
\end{theorem}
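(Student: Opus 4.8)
Since every digraph immersed in $H$ embeds in every surface into which $H$ embeds, it suffices to show that $C_3^{(2)}$ has no embedding in $S^2$. Suppose it did. As $C_3^{(2)}$ is connected, any embedding of it in $S^2$ is cellular, so Euler's formula gives $3-6+F=2$, i.e.\ $F=5$ faces. Writing $v_0,v_1,v_2$ for the vertices, every arc of $C_3^{(2)}$ runs from $v_i$ to $v_{i+1}$ (indices mod $3$), so every closed directed walk has length divisible by $3$; hence each of the five facial walks has length at least $3$, for a total of at least $15$. But the facial walk lengths always sum to $2|E(C_3^{(2)})|=12$, a contradiction.

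\textbf{Sufficiency: reduction to a cofaciality statement.} For the converse I would induct on $|V(H)|$. If $H$ is disconnected, embed its components in disjoint discs of $S^2$; small cases are immediate. So assume $H$ is connected with no $C_3^{(2)}$-immersion. Fix a vertex $v$ and let $H_1,H_2$ be the two digraphs obtained by splitting $v$, with $a^{(i)}_1,a^{(i)}_2$ the two arcs created in $H_i$. Each $H_i$ is immersed in $H$, hence (immersion being transitive) contains no $C_3^{(2)}$-immersion, so by the inductive hypothesis $H_i$ embeds in $S^2$. The crucial observation is an \emph{un-splitting} step: if some $H_i$ has an embedding in $S^2$ in which $a^{(i)}_1$ and $a^{(i)}_2$ lie on the boundary of a common face $\mathcal{F}$, then $H$ embeds in $S^2$. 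Indeed, cut both arcs, place $v$ in the interior of $\mathcal{F}$, and join the four loose ends to $v$ through $\mathcal{F}$; then $\mathcal{F}$ splits into two faces, and a short check (using that in an orientable surface the faces of a directed embedding carry a consistent two-colouring into ``left'' and ``right'' faces) shows that the new local rotation at $v$ is in--out--in--out and that every facial walk meeting $a^{(i)}_1$ or $a^{(i)}_2$ stays directed, so we recover a directed embedding of $H$ in $S^2$. Conversely, splitting an embedded $H$ compatibly with its embedding always produces an embedding of $H_1$ or $H_2$ with the two new arcs cofacial. Hence the theorem is \emph{equivalent} to the assertion that whenever $H$ has no $C_3^{(2)}$-immersion, there is a vertex $v$ and a split $H_i$ of $H$ at $v$ with an embedding in $S^2$ in which the two new arcs are cofacial.

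\textbf{Bringing in Whitney's theorem.} By induction each split $H_i$ already embeds in $S^2$; what is missing is control over which arcs end up cofacial. Here I would invoke Whitney's Theorem~\ref{whitney}: after suppressing loops and reducing along $1$- and $2$-edge-cuts of $H$ (so that the relevant $H_i$ is essentially $3$-connected), Corollary~\ref{maincor} pins down the embedding of $H_i$ uniquely, so the cofaciality of $a^{(i)}_1$ and $a^{(i)}_2$ becomes a purely combinatorial property of $H_i$; in the lower-connectivity cases the flip class supplied by Theorem~\ref{whitney} gives enough freedom to slide two designated arcs onto a common face, or else exhibits a small cut along which the induction applies to a proper reduction of $H$.

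\textbf{The main obstacle.} The heart of the proof is to rule out the ``bad'' case: that for \emph{every} vertex $v$, \emph{every} split at $v$, and \emph{every} embedding (up to Whitney flips), the two new arcs lie in distinct faces. I would argue by contradiction with a minimal counterexample: assuming the bad case holds, one traces, through the forced embeddings of the various splits, three ``pinch vertices'' of $H$ joined by arc-disjoint directed walks forming the pattern of $C_3^{(2)}$, contradicting the hypothesis. Converting ``no usable split anywhere'' into an explicit $C_3^{(2)}$-immersion is the delicate technical step, and I expect it to require a careful case analysis of the directed $2$- and $3$-cut structure of $H$. As a guiding picture: an Euler circuit of the (Eulerian) digraph $H$ is itself a split of $H$ into a single directed cycle, so that special case is precisely a Gauss-code realisability question, with an ``odd interlacement triangle'' in the role of $C_3^{(2)}$; the general argument should be a structured elaboration of that dichotomy.
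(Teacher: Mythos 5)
Your necessity argument is correct and complete: $C_3^{(2)}$ is connected, so an embedding in $S^2$ would be cellular with $F=5$ faces, every directed closed walk in $C_3^{(2)}$ has length divisible by $3$, and $5\cdot 3 > 2|E| = 12$ gives the contradiction; combined with the observation that immersions preserve embeddability, this settles the ``only if'' direction. (Note that the paper itself does not prove this theorem at all --- it is quoted from Johnson's work \cite{johnson} --- so there is no in-paper proof to measure you against; your proposal has to stand on its own.)

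The sufficiency direction, however, contains a genuine gap, and it is exactly where the whole content of the theorem lies. Your reduction is fine as far as it goes: splitting at a vertex preserves the absence of a $C_3^{(2)}$-immersion, the inductive hypothesis embeds each split $H_i$, and the un-splitting step (re-inserting $v$ inside a face whose directed boundary walk uses both new arcs) is sound, modulo minor care when an arc occurs twice on a facial walk. But this only reduces the theorem to the statement that some split at some vertex admits a planar embedding with the two new arcs cofacial, and you explicitly do not prove this; the paragraphs invoking Theorem~\ref{whitney} (``gives enough freedom to slide two designated arcs onto a common face, or else exhibits a small cut'') and the ``main obstacle'' paragraph (``one traces \dots three pinch vertices \dots forming the pattern of $C_3^{(2)}$'') are descriptions of what a proof would have to do, not arguments. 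Extracting a $C_3^{(2)}$-immersion from the failure of cofaciality across all vertices, all splits, and all embeddings in the Whitney flip class is the hard implication --- as your own Gauss-code analogy signals, the special case where $H$ is split down to an Euler circuit is already equivalent to a Gauss-code realizability theorem, which is itself a nontrivial result that you would either have to cite or reprove. As written, the proposal proves the easy half and gives a plausible but unexecuted plan for the other half, so it does not constitute a proof of the theorem.
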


\begin{figure}[htb]
\centerline{\includegraphics[height=4.5cm]{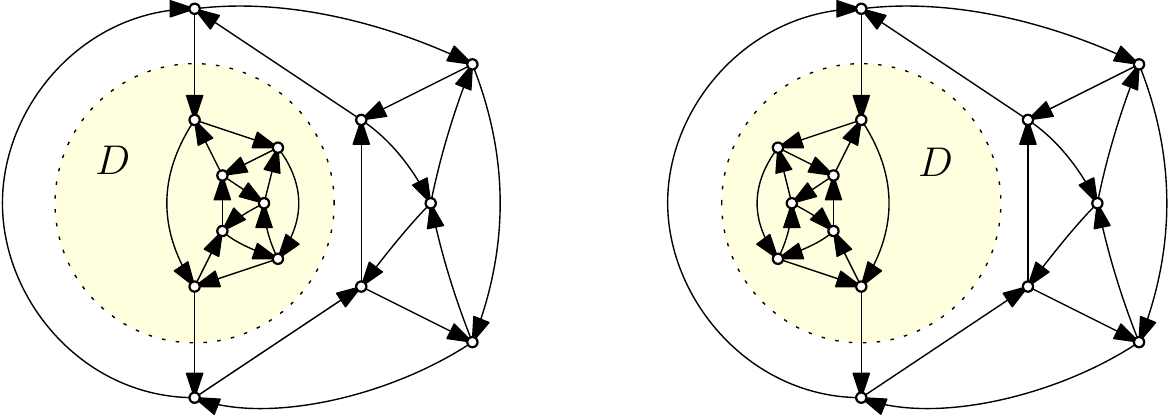}}
\caption{Whitney flip in a 2-regular digraph}
\label{fig:flip directed}
\end{figure}

Our goal here is to prove analogues of theorems of Whitney and Tutte in this new setting.  For a 2-regular digraph $H$ embedded in a surface $\mathcal{S}$, we define a \emph{Whitney flip} to be a Whitney flip of $\mathcal{D}$ in the underlying graph, with the added property that the two points $x,y$ where the boundary of $\mathcal{D}$ meets $H$ are both interior points of edges.  This additional constraint ensures that after performing any Whitney flip, the resulting 2-regular digraph will still be embedded in $\mathcal{S}$ according to our definition.  Our analogue of Whitney's Theorem is as follows.

\begin{theorem}
\label{digraph-whitney}
If $\phi_1$ and $\phi_2$ are embeddings of a connected 2-regular digraph $H$ in the $2$-sphere, then by applying a sequence of Whitney flips, $\phi_1$ can be transformed into an embedding equivalent to $\phi_2$.
\end{theorem}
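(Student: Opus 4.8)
The plan is to mimic the classical route: first establish a Tutte-type theorem characterizing the faces of an embedding via an intrinsic (``peripheral'') condition, deduce uniqueness of embeddings for a suitable ``3-connected-like'' notion, and then run a reduction argument showing that any connected 2-regular digraph can be built up from such pieces by operations whose effect on embeddings is controlled by Whitney flips. Concretely, I would first introduce the directed analogue of a peripheral cycle: a directed cycle $C$ in $H$ such that $C$ is ``induced'' in the immersion sense (no chord can be realized by splitting off vertices of $C$) and such that the digraph obtained from $H$ by deleting $V(C)$ (equivalently, by splitting all vertices of $C$ along $C$ and discarding the resulting closed walk) is connected. The key topological observation, exactly as in the undirected case, is that in any embedding of $H$ in $S^2$ such a directed peripheral cycle must bound a face: it is a simple closed curve, hence separates $S^2$ into two discs, and the connectivity condition forces all of the rest of $H$ to lie on one side.

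Next I would prove the directed Tutte theorem: in a ``3-connected'' 2-regular digraph (the right notion here is roughly that $H$ has no nontrivial 2-edge-cut that can be split off, i.e. no Whitney flip is available --- one should isolate the precise combinatorial hypothesis under which the flip operation is trivial), every edge lies in at least two directed peripheral cycles. This is the combinatorial heart of the paper and I expect it to be the main obstacle: the undirected proof (choosing a non-peripheral cycle through the edge, finding a bridge, and rerouting) must be redone so that all cycles produced are \emph{directed} and all separations respected are the immersion-relevant ones. The in-out-in-out local structure helps --- at each vertex the two in-edges and two out-edges are forced into a fixed cyclic pattern --- so directed cycles through a given edge are fairly rigid; but verifying that one can always adjust a directed cycle and a directed ``bridge path'' to produce a second directed peripheral cycle, without creating an immersed $C_3^{(2)}$ or violating the connectivity requirement, will need care. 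Granting this, the faces of a flip-irreducible $H$ embedded in $S^2$ are exactly its directed peripheral cycles, so such an $H$ has a unique embedding up to equivalence.

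Finally I would handle the general case by induction on the number of edges. If $H$ is flip-irreducible we are done by the uniqueness just established. Otherwise there is a disc $\mathcal{D}$ (with boundary meeting $H$ in two interior edge-points) realizing a nontrivial Whitney flip; cutting along $\partial\mathcal{D}$ and capping off decomposes the embedding $\phi_1$ into embeddings of two strictly smaller connected 2-regular digraphs $H_1,H_2$ (each obtained from a ``side'' by adding a new edge across the cut, just as in the split operation), and similarly the structure of 2-edge-cuts shows $\phi_2$ decomposes the \emph{same} pair $H_1,H_2$ along a corresponding cut --- here one must check that the relevant 2-edge-cut of $H$ is forced, up to the flip, to be the same for any embedding in $S^2$, which again follows from the Jordan curve argument applied to the two-point disc. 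By induction each embedding of $H_i$ can be brought to a standard form by flips of $H_i$, and a flip of $H_i$ lifts to a flip of $H$; re-gluing and possibly performing one more flip at the cut (to match the cyclic order/orientation at the two attachment points) transforms $\phi_1$ into an embedding equivalent to $\phi_2$. The bookkeeping I expect to be delicate is ensuring that the capping-off procedure yields \emph{2-regular} digraphs with honest embeddings under our in-out-in-out convention (so the new edge must be inserted respecting orientations at $x$ and $y$), and that flips performed in the pieces, together with the single possible flip at the seam, generate all the discrepancy between $\phi_1$ and $\phi_2$ --- but this is exactly parallel to Whitney's original inductive argument, with ``2-cut'' replaced by ``two-point disc through edge interiors.''
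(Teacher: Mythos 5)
Your plan is essentially the paper's proof: peripheral directed cycles must bound faces, a Tutte-type theorem gives uniqueness of the embedding in the rigid case, and the general case is handled by induction along a 2-edge-cut, lifting flips from the pieces and performing at most one extra flip at the seam. Two corrections to the setup, though. The right rigidity hypothesis is not a vague ``flip-irreducible'' notion but simply strong 2-edge-connectivity: for an Eulerian digraph this means there is no $X$ with $d_H^+(X)=1$, and since any nontrivial flip disc crosses exactly two edges forming such a cut, this is precisely the condition under which no flip is available. Likewise the right notion of peripheral is a directed cycle $C$ with $H-E(C)$ strongly connected; no chordless/``induced'' condition is needed, and your worry about creating an immersed $C_3^{(2)}$ is a red herring --- the Tutte analogue (Theorem~\ref{digraph-tutte}) is purely combinatorial, proved by choosing a returning directed path that lexicographically maximizes component sizes and rerouting, with no planarity involved. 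The genuine shortfall is that you grant rather than prove this lemma, which is the heart of the matter; in the paper it is established before the Whitney statement (yielding Corollary~\ref{ourcor}), so if it is taken as available your reduction is sound. Your symmetric decomposition into two capped pieces, with induction on both, differs mildly from the paper's asymmetric choice of a minimal $X$ with $d_H^+(X)=1$ (whose capped piece is automatically strongly 2-edge-connected, hence rigid), and both work; in either version you must ensure, as the paper notes, that the flips performed inside a piece avoid the new capping edge so that they lift to flips of $H$.
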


Let us observe that Theorem \ref{digraph-whitney} does not hold for embeddings of arbitrary Eulerian digraphs. For example, take any 3-connected planar graph and replace each edge $e$ with a pair of oppositely oriented edges $e^+,e^-$ joining the same pair of vertices. This digraph has two embeddings with directed facial walks, but the only way to come from one to the other is to exchange pairs $e^+$ with $e^-$ for all edges at the same time. To get examples without digons, one can replace each digon with a digraph of order 7 (see Figure \ref{fig:flip directed} for a clue).

As before, we have a uniqueness result for digraphs which are suitably connected.  We say that a digraph $H$ is strongly $k$-edge-connected if $H- F$ is strongly connected for every $F \subseteq E(H)$ with $|F| < k$.

\begin{corollary}
\label{ourcor}
Any two embeddings of a strongly 2-edge-connected 2-regular digraph in $S^2$ are equivalent.
\end{corollary}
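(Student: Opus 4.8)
The plan is to deduce Corollary~\ref{ourcor} from Theorem~\ref{digraph-whitney} by showing that a strongly $2$-edge-connected $2$-regular digraph admits no nontrivial Whitney flip. So let $H$ be strongly $2$-edge-connected and $2$-regular, embedded in $S^2$, and suppose $\mathcal{D}\subseteq S^2$ is a closed disc whose boundary meets $H$ in exactly two points $x,y$, both interior to edges $e,f$ of $H$ (this is the form a Whitney flip must take in this setting). I want to argue that the flip of $\mathcal{D}$ produces an embedding equivalent to the original, i.e.\ the flip is ``trivial'' in that it changes no facial walk. Combined with Theorem~\ref{digraph-whitney}, this gives that any two embeddings of $H$ have the same facial walks, which is the assertion.

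First I would set up the cut structure. The curve $\partial\mathcal{D}$ crosses $H$ only at the two points $x\in e$ and $y\in f$, so $\{e,f\}$ is an edge cut of the underlying graph separating the vertices inside $\mathcal{D}$ from those outside. If both $e$ and $f$ were oriented ``the same way'' across the cut (say both pointing into $\mathcal{D}$), then the arcs leaving $\mathcal{D}$ would form no directed path back in, contradicting strong connectivity; more precisely, deleting the single edge that is the unique ``exit'' (or entry) edge of the cut would disconnect $H$ strongly, violating strong $2$-edge-connectedness. Hence one of $e,f$ points into $\mathcal{D}$ and the other points out of $\mathcal{D}$. Next I would rule out the degenerate case where $\mathcal{D}$ (or its complement) contains no vertices of $H$: then $\mathcal{D}$ meets $H$ only in a subarc of a single edge, or $e=f$ and the flip is vacuous; so assume both sides contain vertices.

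Now the key step: analyze what the flip does to facial walks. Every face of the embedding is bounded by a directed closed walk. A face either lies entirely inside $\mathcal{D}$, entirely outside, or is one of the (at most two) faces meeting $\partial\mathcal{D}$; the flip fixes the first kind and only possibly alters the facial walks of those meeting the boundary. The arc $\partial\mathcal{D}$ is divided by $x,y$ into two arcs $\gamma_1,\gamma_2$, and each $\gamma_i$ lies in a face $F_i$ meeting the boundary. Using that $e$ enters and $f$ exits $\mathcal{D}$ (so the two ``ends'' of $e$ and $f$ dangling into $F_i$ are compatibly oriented), I would check directly that the Whitney flip of $\mathcal{D}$ can be realized, up to equivalence, by instead flipping a disc that contains \emph{no} vertices of $H$ at all — essentially, one can slide the boundary curve $\partial\mathcal{D}$ across the whole inside of $\mathcal{D}$ to the other side, and because the two crossing edges are oppositely oriented across the cut, the resulting facial walks are unchanged. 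This is the place where the orientation hypothesis is doing real work: in the undirected setting the flip genuinely reverses the cyclic order inside $\mathcal{D}$, but here the in-out-in-out condition at each vertex forces the only consistent rerouting of the two crossing edges to reproduce the same directed facial walks.

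I expect the main obstacle to be making the last step rigorous rather than merely plausible — i.e.\ carefully tracking the two faces $F_1,F_2$ incident with $\partial\mathcal{D}$ and verifying that reflecting $\mathcal{D}$ does not split or merge them and does not reverse any interior facial walk, all using only the hypotheses that each crossing edge is oppositely directed across the cut and that the embedding is by directed walks. One clean way to organize this is to phrase it as: a Whitney flip of $\mathcal{D}$ yields an \emph{equivalent} embedding whenever the two edges crossing $\partial\mathcal{D}$ are oriented oppositely across the cut; strong $2$-edge-connectedness guarantees every admissible $\mathcal{D}$ has this property; hence every Whitney flip of $H$ is trivial, and Theorem~\ref{digraph-whitney} finishes the proof. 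A short appeal to $2$-connectedness of the underlying graph (which follows from strong $2$-edge-connectedness and $2$-regularity) lets us invoke Theorem~\ref{digraph-whitney} in the first place.
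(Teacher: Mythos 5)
Your strategy runs backwards relative to the paper's logical structure and is circular in this context. The paper proves Theorem~\ref{digraph-whitney} by induction on $|V(H)|$, and the base case of that induction (the strongly 2-edge-connected case) is exactly Corollary~\ref{ourcor}; the corollary itself is obtained from Theorem~\ref{digraph-tutte} together with the observation that a peripheral directed cycle of a 2-regular digraph embedded in $S^2$ must bound a face, so that every edge lies on two peripheral cycles which account for both faces incident with it, the facial walks are determined by the abstract digraph, and any two embeddings are equivalent. Deducing the corollary from Theorem~\ref{digraph-whitney}, as you propose, therefore presupposes the very statement to be proved unless you supply an independent proof of the flip theorem, which you do not.

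Independently of the circularity, the step you yourself flag as the ``main obstacle'' is not just unproved but false as you state it. In any Eulerian digraph every edge cut is balanced, so the two edges met by $\partial\mathcal{D}$ are \emph{automatically} oriented oppositely across the cut; your proposed lemma (``a Whitney flip is trivial whenever the two crossing edges are oppositely oriented'') would thus make every Whitney flip of every connected 2-regular digraph trivial and force all embeddings of connected 2-regular digraphs to be equivalent, contradicting the examples discussed in the paper and rendering Theorem~\ref{digraph-whitney} vacuous (see Figure~\ref{fig:flip directed}). The hypothesis of strong 2-edge-connectedness acts earlier and more decisively than you use it: if $\mathcal{D}$ contained vertices of $H$ on both sides, then $\{e,f\}$ would be a 2-edge-cut with exactly one edge in each direction, and deleting just one of these two edges leaves no edge crossing the cut in one of the two directions, so $H$ minus a single edge fails to be strongly connected --- contradicting strong 2-edge-connectedness. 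Hence no admissible flip disc with vertices on both sides exists at all, which is the correct replacement for your ``sliding'' argument; but even with this repair your derivation still rests on Theorem~\ref{digraph-whitney} and remains circular, whereas the intended proof goes through Theorem~\ref{digraph-tutte} and peripheral cycles.
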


As was the case with Corollary \ref{maincor}, the above corollary may be proved using a suitable notion of a peripheral cycle.   Consider a 2-regular digraph $H$ embedded in $S^2$. Let $C \subseteq H$ be a directed cycle, and let $\mathcal{D}_1, \mathcal{D}_2$ be the components of $S^2 - C$ (each of which is homeomorphic to a disc).  It follows from the definition of embedding of a 2-regular digraph, that for every vertex $v \in V(C)$, either both edges of $E(H) \setminus E(C)$ incident with $v$ must be contained in the closure of $\mathcal{D}_1$, or both are contained in the closure of $\mathcal{D}_2$.  As a consequence of this, the graph $H - E(C)$ will be disconnected unless either $\mathcal{D}_1$ or $\mathcal{D}_2$ is a face.  In light of this, we define a directed cycle $C$ in a digraph $H$ to be \emph{peripheral} if $H - E(C)$ is strongly connected.  So, as was the case with undirected graphs, in any embedding of a 2-regular digraph in $S^2$, every peripheral cycle must bound a face.  As such, the following theorem implies  Corollary \ref{ourcor}.

\begin{theorem}
\label{digraph-tutte}
Every edge in a strongly 2-edge-connected Eulerian digraph is contained in at least two peripheral cycles.
\end{theorem}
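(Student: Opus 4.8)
The plan is to prove the slightly stronger statement, better suited to induction, that for every edge $e = uv$ of a strongly 2-edge-connected Eulerian digraph $H$ there are two peripheral directed cycles through $e$ whose edges immediately following $e$ are distinct. The induction is on $|E(H)|$. Two observations will streamline things. First, strong 2-edge-connectivity forces every vertex to have in-degree and out-degree at least $2$, so for any directed cycle $C$ the digraph $H - E(C)$ is Eulerian and has no isolated vertex; hence $H - E(C)$ is strongly connected if and only if its underlying graph is connected. Second, by Mader's splitting-off theorem for Eulerian digraphs one can perform a complete admissible splitting at any chosen vertex $w$ — pair up the in-edges of $w$ with its out-edges, replace each pair by a single edge, and delete $w$ — so that the edge-connectivity between every pair of vertices other than $w$ is unchanged; since every edge cut of a strongly 2-edge-connected Eulerian digraph has at least two edges in each direction, the same is true of the resulting digraph $H'$, so $H'$ is again strongly 2-edge-connected (and Eulerian, with fewer edges).

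In the base case $V(H) = \{u,v\}$, so $H$ consists of $k \ge 2$ parallel edges from $u$ to $v$, $k$ parallel edges from $v$ to $u$, and possibly some loops. The directed cycles through $e$ are exactly the pairs $\{e,f\}$ with $f$ an edge from $v$ to $u$; deleting $\{e,f\}$ leaves a connected Eulerian digraph, so each of these $k$ cycles is peripheral, and their second edges are pairwise distinct. For the inductive step, let $|V(H)| \ge 3$, pick $w \notin \{u,v\}$, and form $H'$ as above. Since $e$ is not incident with $w$ we have $e \in E(H')$, so by the inductive hypothesis $H'$ contains two peripheral directed cycles $C_1', C_2'$ through $e$ with distinct second edges.

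It remains to lift $C_1', C_2'$ to $H$. Given any peripheral cycle $C'$ of $H'$ through $e$, reverse the splitting at $w$: the image of $C'$ is a closed walk that passes through $w$ once for each of the (possibly zero) new edges used by $C'$, and it decomposes into directed cycles of $H$, exactly one of which, call it $C$, contains $e$. Then $C$ is peripheral: every closed walk in $H' - E(C')$ can be traced in $H - E(C)$ by replacing each traversal of a new edge $(x,y)$ by the path $x \to w \to y$ — all new edges occurring in $H' - E(C')$ are ones $C'$ avoids, so these detours survive — and $w$ both reaches and is reached from the strongly connected digraph $H' - E(C')$; hence $H - E(C)$ is strongly connected. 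Furthermore the edge of $C$ following $e$ coincides with that of $C'$ unless the latter is a new edge $(v,y)$, in which case it is replaced by the corresponding edge $(v,w)$; this substitution is injective on the out-edges of $v$, so the lifts of $C_1'$ and $C_2'$ are distinct peripheral cycles of $H$ through $e$ with distinct second edges, completing the induction.

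The step I expect to require the most care is the lifting, especially when $C'$ runs through $w$ several times after the splitting is undone: one must check that the single lifted cycle $C$ we keep genuinely contains $e$, is genuinely peripheral, and genuinely inherits a distinct edge after $e$. The appeal to Mader's splitting-off theorem is standard, but it must be invoked in the form that preserves strong $2$-edge-connectivity rather than merely strong connectivity.
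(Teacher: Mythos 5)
Your proof is correct, but it takes a genuinely different route from the paper's. The paper argues directly and self-containedly: for $e=(u,v)$ it chooses a directed $v$--$u$ path $P$ lexicographically maximizing the component sizes of $H-(E(P)\cup\{e\})$, reroutes $P$ through a smallest component to contradict either that choice or strong 2-edge-connectivity (so $P\cup\{e\}$ is peripheral), and then repeats the same extremal argument over $v$--$u$ paths edge-disjoint from $P$ to produce the second peripheral cycle; no outside results are needed. You instead induct on $|E(H)|$, using the complete splitting-off theorem at a vertex $w\notin\{u,v\}$ (for Eulerian digraphs the local-connectivity-preserving form is due to Frank and Jackson; Mader's directed theorem with $k=2$ also suffices here since $d^+(w)=d^-(w)$) and then lift peripheral cycles back, with your strengthened ``distinct successor edge'' statement ensuring the two lifts are distinct. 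The lifting does work, but the two assertions you leave implicit each deserve a line: since the lifted closed walk repeats only $w$, every other vertex has a unique available out-edge, which is exactly why the cycle $C$ of the decomposition containing $e$ is forced to use the successor of $e$ inherited from $C'$ (and why the split-edge-to-parallel-$(v,w)$-copy correspondence is injective); and $w$ remains attached in $H-E(C)$ because the simple cycle $C$ uses at most one in-edge and one out-edge at $w$ while strong 2-edge-connectivity gives $w$ at least two of each. One further nicety: splitting may create loops, and your strengthened statement fails for loop edges, so phrase the induction hypothesis for non-loop edges (you only ever apply it to $e$). In exchange for relying on a standard but nontrivial external theorem, your approach buys a clean inductive structure and the slightly stronger conclusion about distinct second edges, whereas the paper's argument is shorter and entirely elementary.
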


Our proofs of Theorems \ref{digraph-whitney} and \ref{digraph-tutte} are natural analogues of the proofs of Theorems \ref{whitney} and \ref{tutte}.

\section{Proofs}

We begin by proving our analogue of Tutte's peripheral cycles theorem.

\begin{proof}[Proof of Theorem \ref{digraph-tutte}]
Let $e = (u,v)$ be an edge of $H$.  Our first goal will be to find one peripheral cycle through $e$.  To do this, we choose a directed path $P$ from $v$ to $u$ so as to lexicographically maximize the sizes of the components of $H' = H - (E(P) \cup \{e\})$.  That is, we choose the path $P$ so that the largest component of $H'$ is as large as possible, and subject to this the second largest is as large as possible, and so on.

We claim that $H'$ is connected. Suppose (for a contradiction) that $H'$ has components $H_1, H_2, \ldots, H_k$ with $k > 1$ where $H_k$ is a smallest component.  Let $P'$ be the shortest directed path contained in $P$ which contains all vertices in $V(H_k)\cap V(P)$ and suppose the start of $P'$ is the vertex $x$ and the last vertex is $y$.  By construction, $H_k$ must contain both $x$ and $y$.  Furthermore, since $H_k$ is Eulerian, we may choose a directed path $P''$ in $H_k$ from $x$ to $y$.  If there is a component $H_i$ with $i < k$ which contains a vertex in the interior of $P'$, then we get a contradiction to our choice of $P$, since we can reroute the original path along $P''$ instead of $P'$ and get a new path which improves our lexicographic ordering.  Therefore, all vertices in the interior of $P'$ must also be in $H_k$.  However, in this case $H_k \cup P'$ is a subgraph which is separated from the rest of the graph by just two edges, and we have a contradiction to the strong 2-edge-connectivity of $H$.  It follows that $k = 1$, so the cycle $P \cup \{e\}$ is indeed peripheral.

Since the cycle $P \cup \{e\}$ is peripheral, there exists a directed path $Q$ from $v$ to $u$ with $E(Q) \cap E(P) = \emptyset$.  Among all such directed paths $Q$ we choose one so that the unique component of $H - (E(Q) \cup \{e\})$ which contains $P$ is as large as possible, and subject to that we lexicographically maximize the sizes of the remaining components.  By the same argument as above, this choice will result in another peripheral cycle.
\end{proof}

As mentioned before, the above result immediately implies Corollary \ref{ourcor} which asserts that any two embeddings of a strongly 2-edge-connected 2-regular digraph in $S^2$ are equivalent.  Next we will bootstrap this to prove our analogue of Whitney's Theorem.  For a set of vertices $X \subseteq V(H)$ we let $d_H^+(X)$ denote the number of edges with tail in $X$ and head in $V(H) \setminus X$.

\begin{proof}[Proof of Theorem \ref{digraph-whitney}]
We proceed by induction on $|V(H)|$.  If $H$ is strongly 2-edge-connected, then the result follows from Corollary \ref{ourcor}.  Otherwise, we may choose a set $X \subset V(H)$ so that $d_H^+(X) = 1$ and subject to this we choose $X$ minimal.  Let $u \in X$ be the vertex incident with the outgoing edge. Since $H$ is Eulerian, there is precisely one vertex $v \in X$ incident with the edge coming from $V(H) \setminus X$. Let $H'$ be the 2-regular digraph obtained from $H$ by deleting all vertices not in $X$ and then adding the edge $uv$.  This operation may be performed to the embeddings $\phi_1$ and $\phi_2$ of $H$ to obtain embeddings $\phi'_1$ and $\phi_2'$ of $H'$ in $S^2$ where the new edge follows a path from $u$ to $v$ through the vertices in $V(H) \setminus X$.  By the minimality of $X$, the digraph $H'$ is strongly 2-edge-connected (otherwise there would be a set $Y \subset V(H') = X$ with $1 = d_{H'}^+(Y) \ge d_H^+(Y)$ contradicting the choice of $X$).  By Corollary \ref{ourcor}, the embeddings $\phi'_1$ and $\phi'_2$ are equivalent, and they have the same facial walks.

Next we construct a 2-regular digraph $H''$ from $H$ by deleting $X$ and adding a new edge $e''$ joining the end vertices of the two edges between $X$ and $V(H) \setminus X$.  The embeddings $\phi_1$ and $\phi_2$ of $H$ induce embeddings $\phi''_1$ and $\phi_2''$ of $H''$ in $S^2$.  By induction, we may choose a sequence of Whitney flips to apply to $\phi_1''$ to obtain an embedding of $H''$ which is equivalent to $\phi_2''$.  The disks of the flips can be chosen in such a way that the disks never contain the new edge $e''$. Then the same sequence of Whitney flips may also be applied to the original embedding $\phi_1$ of $H$.  After doing so, it follows from our analysis of $H'$ that the resulting embedding is either equivalent to $\phi_2$ or may be made equivalent to $\phi_2$ by performing a Whitney flip on a suitable $\mathcal{D} \subseteq S^2$ which contains precisely the vertices in $X$ and intersects $H$ in the two edges between $X$ and $V(H) \setminus X$.  This completes the proof.
\end{proof}

\subsubsection*{Remark}
This work was done during the sabbatical visit of Dan Archdeacon at the Simon Fraser University from September to December 2013.

\bibliography{mybib}{}
\bibliographystyle{plain}
\end{document}